\documentclass{raex}
\usepackage{amsmath, amssymb, amsthm, amsfonts}

\usepackage[utf8]{inputenc}
\usepackage[T1]{fontenc}
\usepackage[english]{babel}
\usepackage{url}
\usepackage{mathtools}

\makeatletter
\newcommand{\ThanksMark}{\sbox{\thanks@box}{\footnotemark[1]}}
\makeatother

\begin{Author}
	\FirstName{Mykola}\LastName{Pratsiovytyi}
	\PostalAddress{Institute of Mathematics of NAS of Ukraine, 3 Tereshchenkivska St., Kyiv 01024, Ukraine; Dragomanov Ukrainian State University, 9 Pyrohova St., Kyiv 01601, Ukraine}
	\Email{prats4444@gmail.com}
	\Thanks{This work was supported by a grant from the Simons Foundation (SFI-PD-Ukraine-00014586, M.P., O.V.).}
\end{Author}

\begin{Author}
	\FirstName{Yaroslav}\LastName{Vynnyshyn}
	\PostalAddress{Kyiv Natural Science Lyceum No.~145, 46 Shota Rustaveli St., Kyiv 01033, Ukraine}
	\Email{yartan@gmail.com}
\end{Author}

\begin{Author}
	\FirstName{Oleh}\LastName{Vynnyshyn}
	\PostalAddress{Institute of Mathematics of NAS of Ukraine, 3 Tereshchenkivska St., Kyiv 01024, Ukraine}
	\Email{oleh.vynnyshyn@imath.kiev.ua}
    \ThanksMark
\end{Author}

\begin{MathReviews}
	\primary{40A05}
	\secondary{11B05, 11A63, 28A80}
\end{MathReviews}

\begin{KeyWords}
	\keyword{Cantorval}
	\keyword{achievement set}
	\keyword{second Jones problem}
	\keyword{asymptotics of the terms of a series}
	\keyword{multigeometric series}
	\keyword{Guthrie--Nymann theorem}
	\keyword{mixed-radix numeration system}    
\end{KeyWords}

\title{A series with a Cantorval set of subsums and ratio of consecutive terms tending to $1/2$ (the second Jones problem)}

\theoremstyle{plain}
\newtheorem{theorem}{Theorem}[section]
\newtheorem{lemma}[theorem]{Lemma}
\newtheorem{corollary}[theorem]{Corollary}
\theoremstyle{definition}
\newtheorem{definition}[theorem]{Definition}
\newtheorem{example}[theorem]{Example}
\theoremstyle{remark}
\newtheorem{remark}[theorem]{Remark}

\begin{document}

\maketitle

\begin{abstract}
The second Jones problem (2011) consists in determining whether there exists a convergent positive series for which the ratio of consecutive terms has a limit and the set of subsums is a Cantorval; in the survey by G{\l}{\k a}b and Prus-Wi\'sniowski (2026) this problem is included in the list of open problems. In this paper, a family of such series is constructed explicitly as combinations of blocks of the form $\frac{2^k-1}{2^t} + \frac{2^{k-1}}{2^t} +  \frac{2^{k-2}}{2^t} + \cdots  + \frac{2}{2^t}$, where $t$ is the index of the last term of the block in the series and $k\ge2$ is the length of the block.
For each series of the family it is proved that its set of subsums contains an interval and is not a finite union of closed intervals, and hence is a Cantorval. The proof relies on mixed-radix numerical systems with a redundant alphabet and uses an idea of Guthrie and Nymann (1988). If the lengths of the blocks tend to infinity, then the limit of the ratio of consecutive terms of such series is $\tfrac{1}{2}$, which, by a result of Jones, is the only possible value of the limit for which the set of subsums of a series can be a Cantorval. Thus, an affirmative answer to the second Jones problem is obtained.
\end{abstract}

\section{Introduction}
Let $D= \{0,1\}$ be a two-symbol alphabet and let $L = D \times D \times D \times \cdots$ be the space of infinite sequences of elements of the alphabet $D$.

\begin{definition}
The \emph{set of subsums} (\emph{achievement set}) of the series defined by a sequence of real terms $(x_n)$ is the set
\[ E(x_n) \coloneqq
\left\{
x \;\middle|\;
x = \sum_{n=1}^{\infty} c_n x_n, \quad
(c_n) \in L
\right\}. \]
\end{definition}

The set of subsums of a convergent positive series is compact and perfect~\cite{Kakeya1914}. Kakeya also raised the question of whether it is always either homeomorphic to the Cantor set or a finite union of closed intervals. A complete classification was given by Guthrie and Nymann.

\begin{theorem}[Guthrie--Nymann classification theorem~{\cite[Theorem~1]{GuthrieNymann_1},~\cite{NymannSaenz_2}}]
\label{Hatri_Nymann_th}
The set of subsums of a convergent positive series is

\begin{enumerate}
\item 
either a finite union of closed intervals,
\item
or homeomorphic to the Cantor set,
\item 
or a Cantorval, that is, a set homeomorphic to the set of subsums of the Guthrie--Nymann series $\frac{3}{4} + \frac{2}{4} + \frac{3}{4^2} + \frac{2}{4^2} + \cdots + \frac{3}{4^k} + \frac{2}{4^k} + \cdots$, which is the union of a nowhere dense set and infinitely many closed intervals.

\end{enumerate}
\end{theorem}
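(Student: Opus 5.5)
The plan follows the classical route of Guthrie and Nymann. Let $(x_n)$ be positive with $\sum x_n<\infty$. Put $r_n=\sum_{k>n}x_k$ for the tails, $E=E(x_n)$, and let $F_n$ be the finite set of sums $\sum_{k\le n}c_kx_k$. Then
\[
E=\bigcap_n \bigl(F_n+[0,r_n]\bigr),
\]
and $E$ is compact and perfect by Kakeya's result. The argument splits according to the interior of $E$.

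\textbf{Step 1: empty interior.} Suppose $E$ has empty interior. A compact, perfect, nowhere dense subset of $\mathbb R$ is totally disconnected. By Brouwer's characterization of the Cantor set, $E$ is then homeomorphic to the Cantor set, which is alternative~2.

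\textbf{Step 2: nonempty interior.} Suppose $E$ contains an interval $I$. The key self-similarity is
\[
E=F_n+E_n,\qquad E_n=E(x_{n+1},x_{n+2},\dots),
\]
and likewise each $E_n$ contains a translate of a subinterval of $I$ in a controlled way. Take $p\in F_n$ and $y\in E_n$. Since $I\subset E$, we have $I=\bigcup_{p\in F_n}(p+E_n)\cap I$, a finite union of compacts, so by Baire's theorem some $p+E_n$ has interior.

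Call $n$ ``good'' if $E_n$ contains an interval of length comparable to $r_n$, i.e. if the interval sits near the top of the tail range. Two subcases follow.

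If $E$ is a finite union of intervals, we are in alternative~1. Otherwise $E$ has infinitely many components and is not a finite union of intervals. We must then show that every nondegenerate component is accumulated by other interval components, and that every one-point component is a limit of interval components from both sides. This is the structure that defines a Cantorval: $E$ is the closure of its interior, and the endpoints of nontrivial components are accumulated by nontrivial components.

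The tool for this is the symmetry $x\mapsto \sum x_n - x$, which preserves $E$, together with the decomposition $E=F_n+E_n$. If $x\in E$ is a point component, then for every $n$ we have $x\in p+E_n$ for some $p$. Because $E$ has interior, infinitely many $E_n$ contain intervals of size about some fraction of $r_n$. I would argue this by applying Baire's theorem to $E_n$, using that $E_n$ is obtained from $E$ by removing finitely many terms. The copies $p+E_n$ then place intervals within distance $r_n\to0$ of $x$, on both sides via the symmetry.

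\textbf{Step 3: identification with the model.} Finally, show that any two compact sets with this Cantorval structure are homeomorphic. This is done by a back-and-forth construction matching the interval components, ordered by length, together with the gaps, in an order-preserving way, and then extending by continuity (the Mendes--Oliveira characterization). It follows that $E$ is homeomorphic to the set of subsums of the Guthrie--Nymann series. That series itself must also be checked to be a Cantorval: it contains an interval, yet has infinitely many gaps, for example around $\tfrac{5}{4}\cdot\tfrac{1}{4^k}$-scale positions.

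\textbf{Main obstacle.} The hardest step is Step 2's claim that, once $E$ has interior but is not a finite union of intervals, interval components accumulate at every point component and at every endpoint. The interior obtained from Baire's theorem sits at an uncontrolled location inside $E_n$. I would first try to control it by showing that $E_n$ itself has nonempty interior for all $n$. The reason is that $E\subset F_n+E_n$ with $F_n$ finite, so Baire's theorem applies to $E_n$ directly. Translating these intervals by the points of $F_n$ near $x$ then yields the required accumulation.
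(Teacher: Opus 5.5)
The paper gives no proof of this theorem; it quotes it from Guthrie--Nymann and Nymann--S\'aenz. I therefore compare your outline with the standard argument, and it has a genuine gap in Step~2.

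\textbf{What is fine, and what is missing.} Step~1 is fine, and so is the appeal to the Mendes--Oliveira characterization in Step~3. Your Baire argument in Step~2 correctly gives $\operatorname{int}E_n\neq\emptyset$ for all $n$. Since $x\in p+E_n\subset[p,p+r_n]$ with $r_n\to0$, it follows that $E=\overline{\operatorname{int}E}$. But that is \emph{all} Step~2 proves. The other half of the Mendes--Oliveira condition is never established: both endpoints of every nondegenerate component must be limits of one-point components, so in particular no nondegenerate component may abut a gap. Your ``main obstacle'' remedy (interior of every $E_n$) again only yields density of the interior.

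Nothing in Step~2 uses that $E$ is not a finite union of intervals, and two examples show why that matters. First, everything you prove also holds for $E(3,1,\tfrac12,\tfrac14,\dots)=[0,2]\cup[3,5]$, whose components abut the gap $(2,3)$. Second, the set $[0,1]\cup\{2\}\cup\bigcup_{k\ge1}[2+\tfrac1{2k+1},2+\tfrac1{2k}]$ is compact, equals the closure of its interior and has infinitely many components. Yet it is not homeomorphic to the Guthrie--Nymann set, because its component $[0,1]$ is relatively open. The same objection applies to your check of the model series: ``an interval plus infinitely many gaps'' is not enough.

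Your stated target is also partly false. In the Guthrie--Nymann Cantorval, the endpoints of gaps are one-point components with a gap on one side, so they are not approached by intervals from both sides. Moreover, the reflection $y\mapsto\sum x_n-y$ moves the point, so it cannot give two-sided approximation at a fixed $x$.

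\textbf{The missing idea.} You need to turn ``not a finite union of intervals'' into information about the tails.
\begin{enumerate}
\item Rearrange the terms nonincreasingly; this does not change $E$.
\item By Kakeya's criterion, if $x_n\le r_n$ for all $n\ge N$, then $E=F_{N-1}+[0,r_{N-1}]$. So in your case $x_n>r_n$ for infinitely many $n$.
\item For $n>m$, every element of $E_m$ below $x_n$ lies in $E_n\subset[0,r_n]$. Hence $E_m\cap(r_n,x_n)=\emptyset$, and $E_m$ contains no interval $[0,\eta)$ with $\eta>0$.
\item Suppose a component $[a,b]$, $a<b$, had a gap $(c,a)$ on its left, and fix $m$ with $r_m<a-c$. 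The $m$-th partial sum of any representation of $a$ lies in $E\cap[a-r_m,a]=\{a\}$, so $a\in F_m$. Pick $\eta\in(0,b-a]$ with $F_m\cap(a,a+\eta)=\emptyset$.
\item Every $y\in[a,a+\eta)$ lies in $E$, so $y=q+e$ with $q\in F_m\cap[y-r_m,y]$ and $e\in E_m$. Since $q\in E$, $q>c$ and $E\cap(c,a)=\emptyset$, we get $q\in F_m\cap[a,a+\eta)=\{a\}$. Thus $[0,\eta)\subset E_m$, contradicting item~3.
\item This is where the reflection is useful: it transfers the argument to right endpoints. The gaps $(r_n,x_n)$ also show that $\min E$ and $\max E$ are one-point components.
\item Since no nondegenerate component abuts a gap, gaps accumulate at each endpoint from outside. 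A piece $E\cap[g_1,g_2]$ between two such gaps cannot consist of nondegenerate components only: its countable closed set of endpoints would have an isolated point, producing a component that abuts a gap.
\end{enumerate}
So one-point components accumulate at every endpoint, which is exactly what Step~3 needs.
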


The term ``Cantorval'' was introduced in~\cite{MendesOliveira1994}. Since then, many series and families of series whose sets of subsums are Cantorvals have been described: multigeometric series~\cite{Bartosz_multigeom}, their generalizations~\cite{KMV2024,karvatskyi_2026, Prats_Karv_Makarchuk} and others~\cite{VMPS2019, Nowakowski_OnANewCondition}. Jones~\cite{Jones2011} drew attention to the role of the ratio of consecutive terms: if for a convergent positive series $\sum x_n$ the limit $l \coloneqq \lim_{n \to \infty} \frac{x_{n+1}}{x_n}$ exists, then, by~\cite[Proposition~3.3]{Jones2011}, for $l < \frac{1}{2}$ the set of subsums of the series is homeomorphic to the Cantor set, and for $\frac{1}{2} < l \leq 1 $ it is a finite union of closed intervals. Hence, a Cantorval is possible only for $l=\frac{1}{2}$.
The question of the existence of
a series for which the limit $l = \frac{1}{2}$ exists and the set of subsums is a
Cantorval is known as the \emph{second Jones problem}
(\cite[p.~519]{Jones2011};~\cite[Problem~4]{Survey2025_Prus-Wisniowski}). 

In the present paper, an affirmative answer to this problem is given; namely, a family of such series is constructed explicitly. In particular, the convergent positive series~\eqref{example_series} belongs to this family, its set of subsums $E(x_n)$ is a Cantorval, and moreover
\[ E(x_n) \supset \Bigl[\frac{3}{4}, 1\Bigr]
\qquad\text{and}\qquad
l = \frac12. \] 

After this paper had been written, we learned about the work~\cite{Metriq_Prism_by_AI} by an artificial intelligence laboratory, posted on Zenodo together with solutions to some other problems from the survey~\cite{Survey2025_Prus-Wisniowski}. Their example, the series $x_{2n-1}=\frac{8n+17}{4^n}$, $x_{2n}=\frac{4n+18}{4^n}$, uses different approaches. The authors declare their result only as a candidate solution, since it was obtained with substantial use of artificial intelligence and has not undergone independent peer review (this is exactly why this work is not on arXiv and why it initially escaped our notice). We have studied this work and consider their result elegant, with a complete proof without gaps. The construction of the present paper is different and was obtained independently, it yields a family of series of cardinality continuum with an explicitly specified interval in the set of subsums for each of the series. 

\section{Construction of the series}
Let $s \geq 2$ be a fixed natural number and let $(k_1, k_2, \ldots, k_m)$ be a fixed tuple of positive integers.
For the multigeometric series~\cite{Bartosz_multigeom}
\[ \frac{k_1}{s} + \cdots + \frac{k_m}{s} + \frac{k_1}{s^2} + \cdots+ \frac{k_m}{s^2} + \cdots\]
we shall use the shorthand notation $(k_1, k_2, \ldots, k_m; \frac{1}{s})$.

For integers $k \ge 2$ we put
\[B_k \coloneqq \{2^{k}-1, 2^{k-1}, 2^{k-2},\ldots,2^2, 2\}.\]

Fix a sequence of integers $\bar{a} = (a_j)$ with $a_j \geq 2$, and put
\[
s_j \coloneqq 2^{a_j}, \qquad
t_0 \coloneqq 0, \qquad 
t_j \coloneqq a_1 + a_2 + \cdots + a_j.
\]

\begin{definition}
By the \emph{block of rank $j$} we mean the expression
\[
\frac{B_{a_j}}{2^{t_j}} 
= \frac{2^{a_j}-1}{2^{t_j}} +
\frac{2^{a_j-1}}{2^{t_j}} +
\frac{2^{a_j-2}}{2^{t_j}} + \cdots +
\frac{2^2}{2^{t_j}} +
\frac{2}{2^{t_j}}. \]
\end{definition}

\begin{definition}
By the \emph{hybrid series with decomposition $\bar{a}$} we mean the series whose terms
are obtained by writing out consecutively the elements of the
\emph{blocks of rank $j$}, $j = 1, 2, \ldots$, that is, $\sum_{j=1}^\infty \frac{B_{a_j}}{2^{t_j}}$. 
\end{definition}

We denote the terms of the hybrid series by $x_1,x_2, \ldots$; the terms of the series belonging to the block of rank $j$ have indices $t_{j-1} + 1, \ldots, t_j$. The hybrid series is convergent, since $x_n \leq \frac{1}{2^{n-1}}$ and $\sum_{n=1}^{\infty} x_n < 2$.

\begin{remark}
For $a_j=2$ for all $j$ we obtain the Guthrie--Nymann series $(3,2;\tfrac{1}{4})$, whose set of subsums is known to be a Cantorval.
\end{remark}

\begin{remark}
If $a_j = k$ for all $j$, then the hybrid series is exactly the
multigeometric series $(2^{k}-1, 2^{k-1}, \ldots, 4, 2; \frac{1}{2^{k}})$. It has already been proved that the set of subsums of such a series is a Cantorval~\cite{karvatskyi_2026}.
\end{remark}

\begin{example}
For $\bar{a} = (2,3,4,5,\ldots)$, that is, $a_j = j+1$, the terms of the hybrid series have the form
\begin{equation}
\label{example_series}
\frac{B_2}{2^2}, \frac{B_3}{2^{2+3}}, \ldots, \frac{B_{n+1}}{2^{t_n}}, \ldots ,
\end{equation}
namely:
\[
\frac{3}{4},\ \frac{1}{2},\ \Big| \
\frac{7}{32},\ \frac{1}{8},\ \frac{1}{16},\ \Big| \
\frac{15}{512},\ \frac{1}{64},\ \frac{1}{128},\ \frac{1}{256},\ \Big| \
\frac{31}{16384},\ \frac{1}{1024},\ 
\ldots
\]
\end{example}

\begin{lemma}\label{ratio}
For the hybrid series with decomposition $\bar{a}$ such that $a_j \to \infty$ as $j \to \infty$, we have $\displaystyle \lim_{n\to\infty} \frac{x_{n+1}}{x_n} = \frac12$.
\end{lemma}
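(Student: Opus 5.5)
The plan is to compute the ratio $x_{n+1}/x_n$ explicitly by cases, according to the position of the index $n$ relative to the block structure. Since $t_j=a_1+\cdots+a_j$ and the block of rank $j$ occupies indices $t_{j-1}+1,\ldots,t_j$, the terms of this block are
\[
x_{t_{j-1}+1}=\frac{2^{a_j}-1}{2^{t_j}},\qquad x_{t_{j-1}+i}=\frac{2^{a_j-i+1}}{2^{t_j}}=\frac{1}{2^{t_{j-1}+i-1}}\quad (2\le i\le a_j).
\]
In particular, every term except the first one of a block equals $2^{-(n-1)}$, where $n$ is its index. The first term of block $j$ equals $\frac{1}{2^{t_{j-1}}}\bigl(1-2^{-a_j}\bigr)$, which is $2^{-(n-1)}(1-2^{-a_j})$ with $n=t_{j-1}+1$.

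There are three cases for consecutive indices $n,n+1$.

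\emph{Both indices are non-first positions in the same block.} Then $x_{n+1}/x_n=\tfrac12$ exactly.

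\emph{$n$ is the first index of block $j$ and $n+1$ is its second index.} This is possible because $a_j\ge2$. Then
\[
\frac{x_{n+1}}{x_n}=\frac{2^{-n}}{2^{-(n-1)}(1-2^{-a_j})}=\frac{1}{2}\cdot\frac{1}{1-2^{-a_j}}.
\]

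\emph{$n=t_j$ is the last index of block $j$ and $n+1$ is the first index of block $j+1$.} Since $x_{t_j}=2^{-(t_j-1)}$ (the last term of a block is a non-first term, again using $a_j\ge 2$), we get
\[
\frac{x_{n+1}}{x_n}=\frac{2^{-t_j}(1-2^{-a_{j+1}})}{2^{-(t_j-1)}}=\frac12\bigl(1-2^{-a_{j+1}}\bigr).
\]

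To conclude, note that as $n\to\infty$ the block index $j=j(n)$ containing $n$ tends to infinity, because each block is finite. Since $a_j\to\infty$, both correction factors $(1-2^{-a_j})^{-1}$ and $1-2^{-a_{j+1}}$ tend to $1$. Hence, given $\varepsilon>0$, we can choose $J$ such that for $j\ge J$ all three case values lie within $\varepsilon$ of $\tfrac12$; then the ratio is $\varepsilon$-close to $\tfrac12$ for all $n>t_J$.

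The only delicate point is the bookkeeping of indices at block boundaries, which is settled by the explicit formula above. The hypothesis $a_j\to\infty$ is essential: if $a_j$ were bounded along a subsequence, the boundary ratios would stay away from $\tfrac12$ along that subsequence.
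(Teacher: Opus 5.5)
Your proposal is correct and follows essentially the same route as the paper. Both split into three cases by the position of $n$ in its block, obtain the ratios $\tfrac12$, $\tfrac12\cdot\frac{2^{a_j}}{2^{a_j}-1}$ and $\tfrac12\bigl(1-2^{-a_{j+1}}\bigr)$, and then conclude from $a_j\to\infty$.
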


\begin{proof}
Let $x_n$ belong to the block of rank $j$. If $x_n$ is the first element of the block of rank $j$, then
\[\frac{x_{n+1}}{x_n}  = 
\frac{2^{a_j-1}}{2^{a_j}-1} = 
\frac{1}{2} \cdot \frac{2^{a_j}-1+1}{2^{a_j}-1} =
\frac{1}{2} + \frac{1}{2(2^{a_j}-1)}.\]
If $x_n$ is the last element of the block of rank $j$, then
\[\frac{x_{n+1}}{x_n} = 
\frac{2^{a_{j+1}}-1} {2^{t_{j+1}}}
\cdot \frac{2^{t_j}}{2} =
\frac{2^{a_{j+1}}-1} {2^{a_{j+1}}} \cdot \frac{1}{2} =
\frac{1}{2} - \frac{1}{2^{a_{j+1}+1}} .\]
In all other cases $ \frac{x_{n+1}}{x_n} = \frac{1}{2}$.
Hence, for all $n$,
\[
\Big| \frac{x_{n+1}}{x_n} - \frac{1}{2} \Big| \leq 
 \frac{1}{2(2^{a_j}-1)}+\frac{1}{2^{a_{j+1}+1}} \to 0.
\]
\end{proof}

\begin{definition}
By the \emph{alphabet of rank $m$} we mean the set of subsums of the block $B_m$:
\begin{multline*}
A_m \coloneqq \Bigl\{ \sum_{b \in C} b \;\Big|\; C \in 2^{B_m} \Bigr\} = \\ 
= \Bigl\{\alpha \; : \; \alpha = c_1(2^m - 1) + c_2 2^{m-1} + \ldots 
+c_{m-1} 2^2 + c_m 2, \quad c_j \in D  \Bigr\}.
\end{multline*}
\end{definition}

For rank $m$ we have the base $s =2^m$, and the alphabet $A_m$ consists of integers: all even numbers of the interval $[0, s-2]$ (they are obtained as subsums of the even numbers $\{2,4,\ldots, 2^{m-1}\}$) and all
odd numbers of the interval $[s-1, 2s-3]$ (if the element $2^m -1$ enters the sum). The alphabet $A_m$ contains exactly one pair of consecutive integers, namely $s-2$ and $s-1$.

\section{Existence of an interval in the set of subsums}

We prove that the set of subsums of the hybrid series with decomposition $\bar{a}$ (in particular, of the series~\eqref{example_series}) contains an interval.
The proof reproduces the scheme of
Guthrie and Nymann~\cite[Section~2]{GuthrieNymann_1}, adapted to the number representation system with the sequence of bases $(s_1, s_2, \ldots)$, $s_j = 2^{a_j}$, and the alphabets $(A_{a_1}, A_{a_2}, \ldots)$. In this system the weight of the $j$-th
digit position equals $\frac{1}{2^{t_j}}$.

\begin{definition}
By a \emph{finite fraction} of length $k$ we mean a number
\[
x = 
\sum_{j=1}^{k} \frac{\delta_j}{2^{t_j}}
=: (\delta_1, \delta_2, \ldots, \delta_k) ,
\qquad \delta_j \in \{0, 1, \ldots, s_j - 1\},
\]
that is, $(\delta_1, \delta_2, \ldots, \delta_k)$ is the finite representation of the number $x$ in the number representation system with bases
$(s_1, s_2, \ldots)$, $s_j = 2^{a_j}$. 
\end{definition}

For $c_j \in A_{a_j}, c_j \leq s_j -1$, the finite fraction $( c_1, \ldots, c_k)$ is a subsum of the hybrid series $\sum x_n$, since each $c_j$ corresponds directly to a certain set of summands from the block ${B_{a_j}}$.

\begin{lemma}\label{points_in_interval_in_E}
Every finite fraction $(\delta_1, \ldots, \delta_k)$ with
$\delta_1 = s_1 - 1$ is a subsum of the sum of the first $k$ blocks $(\frac{B_{a_1}}{2^{t_1}}, \ldots, \frac{B_{a_k}}{2^{t_k}})$ of the hybrid series $\sum x_n$.
\end{lemma}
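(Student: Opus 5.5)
Induction on $k$, carrying a borrow from right to left as in the Guthrie--Nymann argument. The aim is to rewrite the given fraction as $\sum_{j=1}^k c_j/2^{t_j}$ with every $c_j\in A_{a_j}$. Each $c_j$ is then a subsum of the block $B_{a_j}$, so the whole sum is a subsum of the first $k$ blocks. Recall that $A_m$ consists of all even integers in $[0,s-2]$ and all odd integers in $[s-1,2s-3]$, where $s=2^m$.

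\emph{The digit step.} Process the digits from $\delta_k$ down to $\delta_2$, and at each position allow a borrow of $0$ or $1$ from the position on its left. One unit at position $j-1$ is worth $s_j$ units at position $j$, so a borrow there adds $s_j$ to the local digit. Suppose position $j\ge 2$ has adjusted digit $d=\delta_j-\varepsilon_j$, where $\varepsilon_j\in\{0,1\}$ is the borrow it already gave to position $j+1$; thus $d\in[-1,s_j-1]$.
\begin{itemize}
\item If $d$ is even and $d\ge 0$, set $c_j=d$ and take no borrow.
\item If $d$ is odd, set $c_j=d+s_j$ and borrow $1$ from position $j-1$. Since $1\le d\le s_j-1$, we get $c_j\in[s_j+1,2s_j-1]$.
\item If $d=-1$, again set $c_j=d+s_j=s_j-1$ and borrow $1$; this is odd and lies in $[s_j-1,2s_j-3]$, so it belongs to $A_{a_j}$.
\end{itemize}

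\emph{The main obstacle.} In the odd case, the value $c_j=d+s_j$ may fall outside the odd part $[s_j-1,2s_j-3]$ of $A_{a_j}$. This happens exactly when $d=s_j-1$, which forces $\delta_j=s_j-1$ and $\varepsilon_j=0$. There $c_j=2s_j-1$ is too large, so the naive borrow fails. I would handle it by not borrowing: take $c_j=s_j-1\in A_{a_j}$ directly, which is legitimate since $s_j-1$ is odd and equals the lower endpoint $s-1$ of the odd part. With this rule, every odd $d\in[1,s_j-1]$ is covered: either $d=s_j-1$ is used as is, or $d+s_j\le 2s_j-3$. So every value $d\in[-1,s_j-1]$ admits a valid choice, and the borrow passed left is always $0$ or $1$. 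I would state this as a short auxiliary claim: for $d\in\{-1,0,\dots,s-1\}$ there exists $\varepsilon\in\{0,1\}$ with $d+\varepsilon s\in A_m$.

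\emph{The first position.} At $j=1$ there is no position further left to borrow from. The digit becomes $d=s_1-1-\varepsilon_1\in\{s_1-1,\,s_1-2\}$. Both values lie in $A_{a_1}$, because $s_1-2$ and $s_1-1$ form exactly the pair of consecutive integers contained in $A_{a_1}$. This is precisely where the hypothesis $\delta_1=s_1-1$ is used.

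Finally, I would formalize the whole procedure as an induction from $j=k$ down to $j=1$. The invariant is
\[
\sum_{j=1}^k \frac{\delta_j}{2^{t_j}}=\sum_{i<j}\frac{\delta_i}{2^{t_i}}-\frac{\varepsilon_{j-1}}{2^{t_{j-1}}}+\sum_{i\ge j}\frac{c_i}{2^{t_i}},
\]
which is preserved because $s_j/2^{t_j}=1/2^{t_{j-1}}$.
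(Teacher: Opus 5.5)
Your proof is correct and is essentially the paper's argument. The paper inducts on $k$; when $\delta_{k+1}\notin A_{a_{k+1}}$ it borrows from the last nonzero digit $\delta_m$, fills the intervening zero positions with $s_i-1$, replaces $\delta_{k+1}$ by $s_{k+1}+\delta_{k+1}$, and recurses on $(\delta_1,\dots,\delta_{m-1},\delta_m-1)$, using $s_1-2,\,s_1-1\in A_{a_1}$ at the first position. This produces exactly the digits given by your local rule (a zero receiving a borrow gives $d=-1\mapsto s_j-1$), so your iterative version with the claim ``every $d\in\{-1,\dots,s-1\}$ admits $\varepsilon\in\{0,1\}$ with $d+\varepsilon s\in A_m$'' is simply that induction unrolled.
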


\begin{proof}
Induction on $k$.

\emph{Base case} ($k = 1$): $(s_1 - 1) \in A_{a_1}$.

\emph{Induction step}. Suppose the statement holds for all lengths $1, \ldots, k$, and
let $x = (\delta_1, \ldots, \delta_{k+1})$, $\delta_1 = s_1 - 1$.

If $\delta_{k+1} \in A_{a_{k+1}}$, that is,
$\delta_{k+1}$ is even or $\delta_{k+1} = s_{k+1} - 1$, then the statement holds by the induction hypothesis. 
Let $\delta_{k+1} \notin A_{a_{k+1}}$; then $\delta_{k+1}$ is an odd number from the interval $[1, s_{k+1} - 3 ]$.
Let $1 \le m \le k$ be the largest index for which
$\delta_m > 0$ (such an $m$ exists, since $\delta_1 > 0$); then
$\delta_{m+1} = \dots = \delta_k = 0$. 
Since
\[
\frac{1}{2^{t_m}}
= \sum_{i=m+1}^{k} \frac{s_i - 1}{2^{t_i}} + \frac{1}{2^{t_{k}}}
= \sum_{i=m+1}^{k} \frac{s_i - 1}{2^{t_i}} + \frac{s_{k+1}}{2^{t_{k+1}}},
\]
we have
\begin{multline*}
(\delta_1, \ldots, \delta_{m}, \underbrace{0, \ldots, 0}_{k-m}, \delta_{k+1}) = \\
(\delta_1, \ldots, \delta_{m-1},\ \delta_m - 1)
+ \sum_{i=m+1}^{k} \frac{s_i - 1}{2^{t_i}}
+ \frac{s_{k+1} + \delta_{k+1}}{2^{t_{k+1}}} =  \\
(\delta_1, \ldots, \delta_{m-1},\ \delta_m - 1, s_{m+1} -1, \ldots, s_k -1) + \frac{s_{k+1} + \delta_{k+1}}{2^{t_{k+1}}} .
\end{multline*}
Here $s_i - 1 \in A_{a_i}$, 
and $s_{k+1} + \delta_{k+1}$ is an odd number from the interval $[s_{k+1}+1,\ 2s_{k+1}-3]$; hence
$s_{k+1} + \delta_{k+1} \in A_{a_{k+1}}$.

For $m \ge 2$, the fraction $(\delta_1, \ldots, \delta_{m-1},\ \delta_m - 1)$
is a subsum of the sum of the first $k$ blocks $(\frac{B_{a_1}}{2^{t_1}}, \ldots, \frac{B_{a_k}}{2^{t_k}})$ by the induction hypothesis. If $m = 1$, then $\delta_1 - 1 = s_1 - 2 \in  A_{a_1}$.
\end{proof}

\begin{theorem}\label{e_contains_interval}
For the hybrid series with decomposition $\bar{a}$ we have 
\[E(x_n) \supset \Bigl[1 - \frac{1}{2^{a_1}}, 1\Bigr].\]
\end{theorem}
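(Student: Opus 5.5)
The plan is to combine Lemma~\ref{points_in_interval_in_E} with a density-plus-compactness argument, exactly as in Guthrie--Nymann. The set $E(x_n)$ is compact, since it is the set of subsums of a convergent positive series. So it suffices to show that a dense subset of $\bigl[1-\frac{1}{2^{a_1}},1\bigr]$ lies in $E(x_n)$.

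First, I would identify the finite fractions with $\delta_1 = s_1-1$. Every finite fraction of length $k$ with $\delta_1 = s_1-1 = 2^{a_1}-1$ has the form $\frac{2^{a_1}-1}{2^{a_1}} + y$. Here $y = \sum_{j=2}^k \delta_j/2^{t_j}$ ranges over all numbers of the form $N/2^{t_k}$ with $0 \le N \le 2^{t_k - t_1}-1$, because mixed-radix representation with digits $\delta_j\in\{0,\ldots,s_j-1\}$ is just binary representation grouped in blocks of $a_j$ bits. Hence these fractions are exactly the dyadic rationals $\frac{M}{2^{t_k}}$ lying in $\bigl[1-\frac{1}{2^{a_1}},\,1-\frac{1}{2^{t_k}}\bigr]$.

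Next, Lemma~\ref{points_in_interval_in_E} gives that each such number is a subsum of the first $k$ blocks. Taking $c_n=0$ for all later terms, it is therefore an element of $E(x_n)$.

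Then I would check density. Since $t_k\to\infty$, the union over $k$ of these dyadic sets is dense in $\bigl[1-\frac{1}{2^{a_1}},1\bigr]$: every point $x$ of the interval with $x<1$ is approximated within $2^{-t_k}$, and the point $1$ is the limit of $1-2^{-t_k}$. By closedness of $E(x_n)$, the whole closed interval is contained in $E(x_n)$.

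The main obstacle is the digit-counting identification in the first step, namely that the fractions with leading digit $s_1-1$ fill every grid point $M/2^{t_k}$ of the interval except the right endpoint. This follows because the range of $\delta_j$ is the full set $\{0,\ldots,2^{a_j}-1\}$. Beyond that, closedness of $E(x_n)$ does all the remaining work.
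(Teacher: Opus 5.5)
Your proof is correct and is essentially the paper's argument. The paper expands each $y$ in the interval in the mixed-radix system with leading digit $s_1-1$, applies Lemma~\ref{points_in_interval_in_E} to the truncations, and uses closedness of $E(x_n)$; your identification of these truncations with the dyadic grid points $M/2^{t_k}$ is a more explicit form of the same density step.
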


\begin{proof}
Every number $y \in [1 - \frac{1}{2^{a_1}}, 1]$ has a representation of the form $y = \sum_{j =1}^{\infty} \frac{\delta_j}{2^{t_j}}$, where $\delta_1 = s_1 -1$, $\delta_j \in \{0,\ldots, s_j - 1 \}$.
 The sequence of partial finite fractions $(\delta_1, \ldots, \delta_k) \to y$ as $k \to \infty$. By Lemma~\ref{points_in_interval_in_E}, this sequence of finite fractions is contained in $E(x_n)$. Since $E(x_n)$ is a perfect set, and hence closed, we get $[1 - \frac{1}{2^{a_1}}, 1] \subset E(x_n)$.
\end{proof}

\begin{remark}
The interval of Theorem~\ref{e_contains_interval} is exactly the set of numbers
$x \in [0,1]$ whose representation in the number representation system with bases
$(s_1, s_2, \ldots)$ begins with the digit $s_1 - 1$.
\end{remark}

\begin{remark}
For $a_j = 2$ for all $j$, Lemma~\ref{points_in_interval_in_E} and
Theorem~\ref{e_contains_interval} turn into the argument of Guthrie and Nymann~\cite[Section~2]{GuthrieNymann_1} for the series $(3,2;\frac{1}{4})$.
\end{remark}

\section{Cantorvality}

\begin{lemma}\label{lemma_not_intervals}
The set of subsums of the hybrid series is not a finite union of closed intervals.
\end{lemma}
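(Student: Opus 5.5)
The plan is to exhibit infinitely many gaps of $E(x_n)$ accumulating at $0$. Since $0\in E(x_n)$, this rules out $E(x_n)$ being a finite union of closed intervals. Indeed, in such a union (with $E(x_n)$ perfect, so no isolated points) the point $0=\min E(x_n)$ would lie in a nondegenerate interval $[0,\varepsilon]\subset E(x_n)$.

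The gaps come from the classical criterion: if a term $x_n$ is strictly larger than the tail $r_n\coloneqq\sum_{i>n}x_i$, and $x_n=\min_{i\le n}x_i$, then $(r_n,x_n)\cap E(x_n)=\emptyset$. To see this, take any subsum. If it uses some term with index $\le n$, it is at least $\min_{i\le n}x_i=x_n$. Otherwise it is at most $r_n$.

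I would apply this with $n=t_j$, the index of the last term $x_{t_j}=2/2^{t_j}$ of the block of rank $j$. First I check that this term is the minimum of $x_1,\dots,x_{t_j}$. Inside a block of rank $i$ the smallest element is $2/2^{t_i}$, and $2/2^{t_i}$ decreases in $i$.

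Next, the tail. Each block of rank $i$ sums to $(2^{a_i}-1+2^{a_i}-2)/2^{t_i}=(2s_i-3)/2^{t_i}$. Hence
\[
r_{t_j}=\sum_{i>j}\frac{2s_i-3}{2^{t_i}}
=\frac{2}{2^{t_j}}-\frac{3}{2^{t_{j+1}}}+\sum_{i>j+1}\frac{2s_i-3}{2^{t_i}},
\]
using $2s_{j+1}/2^{t_{j+1}}=2/2^{t_j}$. The same computation, iterated, bounds the remaining sum by $2/2^{t_{j+1}}$. This gives
\[
r_{t_j}<\frac{2}{2^{t_j}}-\frac{1}{2^{t_{j+1}}}<x_{t_j}.
\]

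So $(r_{t_j},x_{t_j})$ is a nonempty gap of $E(x_n)$ for every $j$, with right endpoint $x_{t_j}\to 0$. Consequently, for every $\varepsilon>0$ the interval $[0,\varepsilon]$ contains such a gap, and is therefore not contained in $E(x_n)$. This contradicts the structure of a finite union of intervals described above.

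The only step needing care is the strict tail estimate $r_{t_j}<x_{t_j}$. It uses the fact that each full block falls short of $2/2^{t_{i-1}}$ by exactly $3/2^{t_i}$, and this deficit is not compensated by later blocks. Everything else is routine bookkeeping.
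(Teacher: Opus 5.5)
Your proof is correct and rests on the same key fact as the paper's: a term exceeds its tail, $x_n>r_n$, for infinitely many $n$. The paper proves the terms are decreasing and then cites Kakeya's criterion, whereas you prove the needed direction of that criterion yourself by exhibiting the gaps $(r_{t_j},x_{t_j})$ accumulating at $0$; this needs only $x_{t_j}=\min_{i\le t_j}x_i$ rather than global monotonicity, and your tail estimate could be shortened using the bound $x_l\le 2^{-(l-1)}$, as the paper does.
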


\begin{proof}
If $x_n$ and $x_{n+1}$ belong to the same block, then within the block the denominators of the terms of the series are equal while the numerators decrease, so $x_n > x_{n+1}$. Let $x_n$ be the last element of the block of rank $j$ and $x_{n+1}$ be the first element of the block of rank $j+1$. At the junction of the blocks we have:
\[x_{n+1} = \frac{2^{a_{j+1}} - 1}{2^{t_{j+1}}} = 
\frac{1}{2^{t_j}} \cdot \frac{2^{a_{j+1}} - 1} {2^{a_{j+1}}} < \frac{1}{2^{t_j}} < \frac{2}{2^{t_j}} = x_n.\]
Hence, the sequence of terms of the series is decreasing.

Let $x_k$ be the $m$-th element of the block of rank $j$, $m \geq 2$, that is, $k = t_{j-1} + m$. Then $x_k = \frac{2^{a_j -m +1}}{2^{t_j}} = \frac{1}{2^{t_{j-1} + m - 1}} = \frac{1}{2^{k-1}}$. If $m=1$, then $x_k <\frac{1}{2^{k-1}}$.
Denote by $r_n = \sum_{l=n+1}^{\infty} x_l$ the remainder of the series. 
If $x_n$ is not the first element of a block, then $r_n < \sum_{l=n}^\infty \frac{1}{2^l} = \frac{1}{2^{n-1}} = x_n$.
Hence, the inequality $x_n > r_n$ holds for infinitely many $n$.

By Kakeya's criterion~\cite{Kakeya1914}, for a series with a decreasing sequence of terms for which $x_n > r_n$ infinitely often, the set of subsums $E(x_n)$ is not a finite union of closed intervals.
\end{proof}

\begin{theorem}\label{Cantorvality}
For the hybrid series with decomposition $\bar{a}$, the set of subsums is a Cantorval.
\end{theorem}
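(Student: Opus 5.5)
The plan is to combine the Guthrie--Nymann classification (Theorem~\ref{Hatri_Nymann_th}) with the two facts already established. Since the hybrid series is convergent and positive, its set of subsums $E(x_n)$ falls into exactly one of the three classes of Theorem~\ref{Hatri_Nymann_th}. We then exclude the first two classes, and what remains is the Cantorval case.

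First, a set homeomorphic to the Cantor set is totally disconnected and nowhere dense, so it contains no nondegenerate interval. By Theorem~\ref{e_contains_interval}, however,
\[
E(x_n) \supset \Bigl[1-\frac{1}{2^{a_1}},1\Bigr],
\]
and this interval is nondegenerate because $a_1\ge 2$. Hence $E(x_n)$ has nonempty interior and cannot be homeomorphic to the Cantor set. Second, Lemma~\ref{lemma_not_intervals} states that $E(x_n)$ is not a finite union of closed intervals, which excludes the first class. Only the third alternative is left, so $E(x_n)$ is a Cantorval.

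No step here is a real obstacle, since the substantive work sits in Theorem~\ref{e_contains_interval} and Lemma~\ref{lemma_not_intervals}. The one point to state explicitly is why the Cantor-set case is excluded: a homeomorph of the Cantor set is totally disconnected, so it contains no nondegenerate connected subset, and in particular no interval.

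We could also add a remark tying this to the second Jones problem. Choosing any decomposition with $a_j\to\infty$, for instance \eqref{example_series}, Lemma~\ref{ratio} gives $\lim_{n\to\infty} x_{n+1}/x_n=\tfrac12$. Together with Theorem~\ref{Cantorvality}, this answers the problem affirmatively. Moreover, the sequences $(a_j)$ with $a_j\ge2$ and $a_j\to\infty$ form a family of cardinality continuum, and distinct sequences give distinct series.
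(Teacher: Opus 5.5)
Your proposal is correct and follows the paper's proof. Theorem~\ref{e_contains_interval} rules out the Cantor-set case, Lemma~\ref{lemma_not_intervals} rules out the finite-union case, and the Guthrie--Nymann trichotomy (Theorem~\ref{Hatri_Nymann_th}) then leaves only the Cantorval.
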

\begin{proof}
The existence of an interval (Theorem~\ref{e_contains_interval}) in the set of subsums of the hybrid series means that $E(x_n)$ is not homeomorphic to the Cantor set, while Lemma~\ref{lemma_not_intervals} shows that $E(x_n)$ is not a finite union of closed intervals. Therefore, by the Guthrie--Nymann classification theorem (Theorem~\ref{Hatri_Nymann_th}), $E(x_n)$ is a Cantorval.    
\end{proof}

\begin{corollary}
For the hybrid series with decomposition $\bar{a} = (a_j)$ such that $a_j \to \infty$ as $j \to \infty$, the limit $\lim_{n\to\infty} x_{n+1}/x_n$ exists and equals $\frac{1}{2}$ (Lemma~\ref{ratio}), and the set $E(x_n)$ is a Cantorval (Theorem~\ref{Cantorvality}). In particular, this holds for the series~\eqref{example_series}.
\end{corollary}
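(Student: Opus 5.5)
This corollary needs no new ideas; the plan is to combine the two results already proved and then check that the example series satisfies the hypothesis. First, I fix a decomposition $\bar{a}=(a_j)$ with integers $a_j\ge 2$ and $a_j\to\infty$. By Lemma~\ref{ratio}, the ratio $x_{n+1}/x_n$ is within
\[
\frac{1}{2(2^{a_j}-1)}+\frac{1}{2^{a_{j+1}+1}}
\]
of $\tfrac12$, where $j$ is the rank of the block containing $x_n$.

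As $n\to\infty$ the rank $j=j(n)$ also tends to infinity. This holds because each block is finite, of length $a_j$, so only finitely many indices $n$ lie in blocks of rank at most $J$. Since $a_j\to\infty$, the bound above tends to $0$, and hence $\lim_{n\to\infty} x_{n+1}/x_n=\tfrac12$. I would state this passage from $n\to\infty$ to $j\to\infty$ explicitly, because the displayed bound in the proof of Lemma~\ref{ratio} is written in terms of $j$.

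Second, Theorem~\ref{Cantorvality} applies to every hybrid series, whatever the decomposition, so $E(x_n)$ is a Cantorval. That theorem itself rests on Theorem~\ref{e_contains_interval}, which shows $E(x_n)$ is not homeomorphic to the Cantor set, and on Lemma~\ref{lemma_not_intervals}, which shows it is not a finite union of intervals. Together these cover the whole family.

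Finally, the series~\eqref{example_series} is the case $a_j=j+1$. Here $a_j\ge 2$ and $a_j\to\infty$, so both conclusions hold for it. In addition, Theorem~\ref{e_contains_interval} with $a_1=2$ gives the explicit interval $[\tfrac34,1]\subset E(x_n)$.

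There is no real obstacle. The only point needing care is the index bookkeeping in the limit, namely $j(n)\to\infty$, which I have addressed above.
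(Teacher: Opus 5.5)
Your proposal is correct and matches the paper: the corollary is just Lemma~\ref{ratio} combined with Theorem~\ref{Cantorvality}, and the series~\eqref{example_series} is the case $a_j=j+1\to\infty$. Your remark that the block rank $j(n)\to\infty$ as $n\to\infty$, because every block is finite, makes explicit a step the paper's proof of Lemma~\ref{ratio} leaves implicit.
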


This corollary gives an affirmative answer to the second Jones problem (\cite[p.~519]{Jones2011}; \cite[Problem~4]{Survey2025_Prus-Wisniowski}).

\section*{Acknowledgements}
The English translation from Ukrainian original text was done with usage of an AI language model Claude Fable 5.1. We have checked and edited the translation and take full responsibility for the content.

\end{document}